\newcommand{\bx}{{\boldsymbol x}}
\newcommand{\bn}{{\boldsymbol n}}
\newcommand{\mB}{{\mathcal B}}
\newcommand{\Uad}{{U_{\text{ad}}}}
\begin{document}

\title{POD for optimal control of the Cahn-Hilliard system using spatially adapted snapshots}

\titlerunning{POD for control of Cahn-Hilliard using spatially adapted snapshots}

\author{ Carmen Gr\"a\ss{}le, Michael Hinze \and Nicolas Scharmacher  }

\authorrunning{C. Gr\"a\ss{}le, M. Hinze, N. Scharmacher}   

\institute{
Universit\"at Hamburg, Bundesstra\ss{}e 55, 20146 Hamburg, Germany, {\tt \{carmen.graessle, michael.hinze, nicolas.scharmacher\}@uni-hamburg.de}
}

\maketitle

\begin{abstract}
The present work considers the optimal control of a convective Cahn-Hilliard system, where the control enters through the velocity in the transport term. We prove the existence of a solution to the considered optimal control problem. For an efficient numerical solution, the expensive high-dimensional PDE systems are replaced by reduced-order models utilizing proper orthogonal decomposition (POD-ROM). The POD modes are computed from snapshots which are solutions of the governing equations which are discretized utilizing adaptive finite elements. The numerical tests show that the use of POD-ROM combined with spatially adapted snapshots leads to large speedup factors compared with a high-fidelity finite element optimization.
\end{abstract}

\section{Introduction}
The optimal control of two-phase systems has been studied in various papers, see e.g. \cite{HRV10}, \cite{HW12} and \cite{RS15}. In this paper, we concentrate our investigations on the diffuse interface 
approach, where we assume the existence of interfacial regions of small width between the phases. This has the advantage that topology changes like droplet collision or coalescence can be handled in a natural way.
Many degrees of freedom are needed in the interfacial regions in order to resemble the steep gradients well, whereas in the pure phases a small number of degrees of freedom is sufficient. Thus, in order to make numerical computations feasible, we utilize adaptive finite element methods. However, the optimization of a phase field model is still a costly issue, since a sequence of large-scale systems has to be solved repeatedly. For this reason, we replace the high-dimensional systems by low-dimensional POD approximations. This has been done in e.g. \cite{Vol01} for uniformly discretized snapshots.\\
We perform POD based optimal control using spatially adapted snapshots. The combination of POD with adaptive 
finite elements has been investigated for time-dependent problems in \cite{URL16} and \cite{GH17}.

\section{Convective Cahn-Hilliard system}
We consider the Cahn-Hilliard system which was introduced in \cite{CH58} as a model for phase transitions in binary systems. In a bounded and open domain \mbox{$\Omega \subset \mathbb{R}^{\mathtt{d}}$}, ${\mathtt{d} \in \{2,3\}}$, with Lipschitz boundary $\partial \Omega$, we assume two substances $A$ and $B$ to be given. In order to describe the spatial distribution over time $I=(0,T]$ with fixed end time $T>0$, a phase field variable $\varphi$ is introduced which fulfills $\varphi(t,\bx) = +1$ in the pure $A$-phase and $\varphi(t,\bx) = -1$ in the pure $B$-phase. Values of $\varphi$ between $-1$ and $+1$ represent the interfacial area between the two substances. Introducing the chemical potential $\mu$, the Cahn-Hilliard equations can be written as a coupled system of second-order in space
 \begin{equation}\label{CHcoupled_nocontrol} 
   \left\{
 \begin{array}{rcll}
 \varphi_t  + \mathtt{v} \cdot \nabla \varphi - b\Delta \mu & = &  0 &  \quad \text{in } I \times \Omega,\\
  -\sigma \varepsilon \Delta \varphi + \frac{\sigma}{\varepsilon} \mathcal{F}'(\varphi) & = & \mu  & \quad \text{in } I \times \Omega,\\  
   \partial_n \varphi = \partial_n \mu &  = &  0 & \quad  \text{on } I \times \partial \Omega,\\
  \varphi(0,\cdot) & = & \varphi_0 &   \quad \text{in } \Omega.
 \end{array}
 \right.
 \end{equation}

\noindent  By $b>0$ we denote a constant mobility, $\sigma > 0$ describes the surface tension and $\varepsilon >0$ is a parameter which is related to the interface width. 
  \noindent For the free energy $\mathcal{F}$, we consider the smooth polynomial free energy  (see e.g. \cite{EZ86}) \vspace{-0.1cm}
  \begin{equation*}
  \mathcal{F}(\varphi)  = \frac{1}{4}(1-\varphi^2)^2. 
  \end{equation*}
  

 \noindent A possible flow of the mixture at a given velocity field $\mathtt{v}$ is modeled in \eqref{CHcoupled_nocontrol} by the transport term which, in the context
 of multiphase flow, represents the coupling to the Navier-Stokes system, see e.g. \cite{HH77} and \cite{AGG12}. We use the following notations and assumptions:\\

 \noindent   \textit{Notations 2.1}\\[0.1cm]
 We denote by $H_{(0)}^1(\Omega)$ the space of functions in $H^1(\Omega)$ with zero mean value and by $L_\sigma^2(\Omega)^\mathtt{d} = \{f \in L^2(\Omega)^\mathtt{d}: \text{div}f = 0, f \cdot \bn_\Omega |_{\partial \Omega} = 0\}$ the space of solenoidal vector fields, for which we refer to \cite{Soh01} for details about well-definedness. We use as the solution space for the phase field variable $W(0,T) = \{f \in L^2(0,T;H_{(0)}^1(\Omega)) : f_t \in L^2(0,T;H_{(0)}^{-1}(\Omega))\}$.\\

 \renewcommand{\labelenumi}{\roman{enumi})}
 \noindent {\it Assumptions 2.2}
 \begin{enumerate}
  \item  The initial phase field $\varphi_0 \in H_{(0)}^1(\Omega)$ fulfills   
 $E_0 = E(\varphi_0)<\infty$ with Ginzburg-Landau free energy 
 \begin{equation*}
  E(\varphi) =  \int_\Omega  \frac{\sigma \varepsilon}{2} |\nabla \varphi|^2 + \frac{\sigma}{\varepsilon} \mathcal{F}(\varphi)d\bx.
 \end{equation*}
 \item The velocity $\mathtt{v}$ fulfills $ \mathtt{v} \in L^\infty(0,T;L_\sigma^2(\Omega)^\mathtt{d}) 
 \cap L^2(0,T;H^1(\Omega)^\mathtt{d})$.\\
 \end{enumerate}

  \noindent {\it Remark 2.3}\\[0.1cm]
   It is shown in \cite[Theorem 4.1.1]{Abe07} that
  there exists a unique solution 
  $(\varphi, \mu)$ to \eqref{CHcoupled_nocontrol}   with $\varphi \in 
  W(0,T)\cap L^2(0,T;H^2(\Omega))$, $\mu \in L^2(0,T;H^1(\Omega))$. This solution satisfies
  \begin{equation}\label{est-1}
    \|\varphi \|_{L^2(0,T;H^2(\Omega))}^2 + 
  \|\varphi_t\|_{L^2(0,T;H_{(0)}^{-1}(\Omega))}^2  \leq C \left( E_0 + \| \mathtt{v}\|_{L^2(0,T;L^2(\Omega)^\mathtt{d})}^2\right)
  \end{equation}
  where $C$ 
  is independent of $\mathtt{v}$ and $ \varphi_0$.

\section{Optimal control of Cahn-Hilliard}
  
\noindent We investigate the minimization of the quadratic objective functional
 \begin{equation*} 
  J(\varphi,u) = \frac{\beta_1}{2} \|\varphi - \varphi_d\|_{L^2(0,T;L^2(\Omega))}^2  + \frac{\beta_2}{2} \|\varphi-\varphi_T\|_{L^2(\Omega)}^2 + \frac{\gamma}{2} \|u\|_U^2
 \end{equation*}
   where $\beta_1,\beta_2 \geq 0$ are given constants, $\varphi_d \in L^2(0,T;L^2(\Omega))$ is the desired phase field, 
 $\varphi_T \in L^2(\Omega)$ is the target phase pattern at final time, $\gamma > 0$ is the penalty parameter and $u \in U = L^2(0,T;\mathbb{R}^m)$, with $m \in \mathbb{N}$, denotes the control variable which is a time-dependent variable and in particular independent of the current spatial discretization. The goal of the optimal control problem is to steer a given initial phase distribution $\varphi_0$ to a given desired phase pattern. This problem can also be interpreted as an optimal control of a free boundary which is encoded through the phase field variable. We consider distributed control which enters through the transport term:
 \begin{equation}\label{CHcoupled_withcontrol} 
   \left\{
 \begin{array}{rcll}
 \varphi_t  + (\mB u) \cdot \nabla \varphi- b\Delta \mu & = &  0 & \quad \text{ in } I \times \Omega,\\
  -\sigma \varepsilon \Delta \varphi +  \frac{\sigma}{\varepsilon} \mathcal{F}'(\varphi) & = &  \mu & \quad \text{ in } I \times \Omega.\\
   \partial_n \varphi = \partial_n \mu & = &  0 & \quad  \text{on } I \times \partial \Omega,\\
   \varphi(0,\cdot) &  = &  \varphi_0 &  \quad \text{in } \Omega.
 \end{array}
 \right.
 \end{equation}
 The control operator $\mB$ is defined by $\mB: U \to L^2(0,T;H^1(\Omega)^\mathtt{d}), (\mB u)(t) = 
   \sum_{i=1}^m u_i(t) \chi_i$ where $\chi_i \in L_\sigma^2(\Omega)^\mathtt{d} \cap H^1(\Omega)^\mathtt{d}, 1 \leq i 
   \leq m$, represent given control shape functions. The admissible set of controls is  
   \begin{equation*}
    \Uad = \{u \in U \; | \;  u_a(t) \leq u(t) \leq u_b(t) \text{ in } \mathbb{R}^m \text{ a.e. in } [0,T]\}
   \end{equation*}
   with $u_a, u_b \in L^\infty(0,T;\mathbb{R}^m), u_a(t) \leq u_b(t)$ almost everywhere in $[0,T]$. The inequalities between vectors are understood componentwise. Then, the optimal control problem can be expressed as
 \begin{equation}\label{ocp}
  \min_u \hat{J}(u) = J(\varphi(u),u) \quad \text{ s.t. } \quad (\varphi(u),u) \text{ satisfies } \eqref{CHcoupled_withcontrol}  \quad \text{ and } \quad u\in \Uad.
 \end{equation}

 \begin{theorem}[Existence of an optimal control]
 Problem \eqref{ocp} admits a solution $\bar{u} \in \Uad$.
 \end{theorem}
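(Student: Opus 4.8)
The plan is to apply the direct method of the calculus of variations. First I would observe that $\hat{J}\ge 0$, so the infimum $j:=\inf_{u\in\Uad}\hat{J}(u)$ is finite and nonnegative, and the set $\Uad$ is nonempty (e.g.\ $u_a\in\Uad$), convex and closed in $U$. I would pick a minimizing sequence $(u_n)_n\subset\Uad$ with $\hat{J}(u_n)\to j$. Since $u_a,u_b\in L^\infty(0,T;\mathbb{R}^m)$, the sequence $(u_n)$ is bounded in $L^\infty(0,T;\mathbb{R}^m)$, hence in $U$ (alternatively boundedness in $U$ follows from $\gamma>0$ and the boundedness of $\hat{J}(u_n)$). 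Because $\Uad$ is convex and closed in $U$ it is weakly sequentially closed, so after extracting a subsequence (not relabeled) I may assume $u_n\rightharpoonup\bar{u}$ weakly in $U$ with $\bar{u}\in\Uad$.

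Next I would derive uniform bounds on the associated states. Writing $\varphi_n:=\varphi(u_n)$ with chemical potential $\mu_n$, the operator $\mB$ is linear and bounded, so $\mB u_n\rightharpoonup\mB\bar{u}$ in $L^2(0,T;H^1(\Omega)^{\mathtt{d}})$, and since $u_n\in\Uad$ the sequence $(\mB u_n)$ is also bounded in $L^\infty(0,T;L_\sigma^2(\Omega)^{\mathtt{d}})$; in particular Assumptions~2.2 ii) hold and Remark~2.3 applies uniformly in $n$. The a priori estimate \eqref{est-1} then gives that $(\varphi_n)$ is bounded in $L^2(0,T;H^2(\Omega))$ and $(\partial_t\varphi_n)$ is bounded in $L^2(0,T;H_{(0)}^{-1}(\Omega))$; inserting these bounds into the second equation of \eqref{CHcoupled_withcontrol}, using $\mathcal{F}'(\varphi)=\varphi^3-\varphi$ and the embedding $H^2(\Omega)\hookrightarrow L^\infty(\Omega)$ for $\mathtt{d}\le 3$, yields a bound for $(\mu_n)$ in $L^2(0,T;H^1(\Omega))$. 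Passing to a further subsequence I obtain $\varphi_n\rightharpoonup\bar{\varphi}$ in $L^2(0,T;H^2(\Omega))$, $\partial_t\varphi_n\rightharpoonup\partial_t\bar{\varphi}$ in $L^2(0,T;H_{(0)}^{-1}(\Omega))$ and $\mu_n\rightharpoonup\bar{\mu}$ in $L^2(0,T;H^1(\Omega))$; by the Aubin--Lions lemma (using the chain $H^2\hookrightarrow\hookrightarrow H^1\hookrightarrow H^{-1}$) also $\varphi_n\to\bar{\varphi}$ strongly in $L^2(0,T;H^1(\Omega))$ and, along a subsequence, pointwise a.e.\ in $I\times\Omega$.

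With these convergences I would pass to the limit in the weak formulation of \eqref{CHcoupled_withcontrol} tested against smooth functions: the terms linear in $\varphi_n$, $\mu_n$ and the initial datum pass by weak convergence; for the transport term I use that $\mB u_n\rightharpoonup\mB\bar{u}$ weakly in $L^2(0,T;L^2(\Omega)^{\mathtt{d}})$ while $\nabla\varphi_n\to\nabla\bar{\varphi}$ strongly in $L^2(0,T;L^2(\Omega)^{\mathtt{d}})$, so the product converges distributionally to $(\mB\bar{u})\cdot\nabla\bar{\varphi}$; and for the polynomial term the pointwise convergence together with the uniform bound of $\varphi_n$ in $L^2(0,T;L^\infty(\Omega))$ and a generalized dominated convergence argument give $\mathcal{F}'(\varphi_n)\to\mathcal{F}'(\bar{\varphi})$ in $L^2(0,T;L^2(\Omega))$. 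Hence $(\bar{\varphi},\bar{\mu})$ solves \eqref{CHcoupled_withcontrol} with control $\bar{u}$, and by uniqueness (Remark~2.3) $\bar{\varphi}=\varphi(\bar{u})$, so $\bar{u}$ is admissible for \eqref{ocp}. Finally $J$ is continuous and convex on $W(0,T)\times U$, being a sum of squared Hilbert-space norms, hence weakly sequentially lower semicontinuous, so $\hat{J}(\bar{u})=J(\bar{\varphi},\bar{u})\le\liminf_n J(\varphi_n,u_n)=j$; together with $\hat{J}(\bar{u})\ge j$ this shows $\bar{u}$ is a minimizer. The main obstacle is the passage to the limit in the nonlinear terms, in particular the transport term $(\mB u_n)\cdot\nabla\varphi_n$, which is a product of the merely weakly convergent factor $\mB u_n$ with $\nabla\varphi_n$; this is exactly where the compactness provided by the a priori bound \eqref{est-1} via Aubin--Lions is indispensable, the auxiliary bound on $(\mu_n)$ being a further, though routine, point.
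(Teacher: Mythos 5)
Your proposal is correct and follows essentially the same route as the paper: the direct method with weak compactness of $\Uad$, the a priori estimate \eqref{est-1} to bound the states, Aubin--Lions/Simon compactness to get strong convergence of $\varphi_n$ in $L^2(0,T;H^1(\Omega))$, and then weak-times-strong convergence for the transport term. The only cosmetic differences are that you handle $\mathcal{F}'$ via pointwise a.e.\ convergence and dominated convergence where the paper uses the growth estimate $|\mathcal{F}'(\varphi)-\mathcal{F}'(\psi)|\leq C(\varphi^2+\psi^2)|\varphi-\psi|$, and that you additionally record a uniform bound on $\mu_n$, which the paper leaves implicit.
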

 \begin{proof} The infimum $\inf_{u \in \Uad} \hat{J}(u)$ exists due to $\hat{J} \geq 0$ and $\Uad \neq \emptyset$. Let $\{u_n\}_{n \in \mathbb{N}} \subset \Uad$ be a minimizing sequence and $\{\varphi_n\}_{n \in \mathbb{N}}$ the corresponding sequence of states $\varphi_n = \varphi(u_{n})$. Since $\Uad$ is closed, convex and bounded in $L^2(0,T;\mathbb{R}^m)\supset L^{\infty}(0,T;\mathbb{R}^m)$, we can extract a subsequence (denoted by the same name), which converges weakly to some $\bar{u}\in \Uad$. Weak convergence 
  $\mB u_n \rightharpoonup \mB \bar{u}$ in $L^2(0,T;H^1(\Omega)^{\mathtt{d}})$ follows from the linearity and boundedness of $\mB$. Due to the energy estimate \eqref{est-1} 
  there exists a constant $M>0$ such that for all $n\in\mathbb{N}$ we have
  $$\|\varphi_n \|_{L^2(0,T;H^2(\Omega))}^2 + \|\varphi_{n,t}\|_{L^2(0,T;H_{(0)}^{-1}(\Omega))}^2 \leq M.$$
  Since $W(0,T)\cap L^2(0,T;H^2(\Omega))$ is reflexive, there exists another subsequence (denoted by the same name) that converges weakly to some $\bar{\varphi}
  \in W(0,T)\cap L^2(0,T;H^2(\Omega))$. It remains to show, that the 
  pair $(\bar{\varphi},\bar{u})$ 
 is admissible, i.e. $\bar{\varphi}=\varphi(\bar{u})$. While passing to the 
 limit in the weak formulation is clear for the linear terms, the nonlinear 
 ones require further investigation. Since 
 $W(0,T)\cap L^2(0,T;H^2(\Omega))$ is compactly embedded in 
 $L^2(0,T;H^1(\Omega))$ (see \cite[Sect. 8, Corr. 4]{Sim86}), the sequence 
 $\{\varphi_n\}_{n \in \mathbb{N}}$ converges strongly to 
 $\bar{\varphi}$ in $L^2(0,T;H^1(\Omega))$. For the control term we have 
 for $v\in H^1(\Omega)$ the splitting
 \begin{multline*}
 \int_0^T ( \mB u_n \cdot\nabla \varphi_n - \mB \bar{u}\cdot\nabla 
 \bar{\varphi},v )_{L^2(\Omega)} \,dt =
 \\ \int_0^T ( \mB u_n \cdot \nabla
 (\varphi_n - \bar{\varphi}),v )_{L^2(\Omega)} \,dt + 
 \int_0^T ( \mB (u_n-\bar{u})\cdot\nabla \bar{\varphi},v )_{L^2(\Omega)} \,dt.
 \end{multline*}
 Due to $\nabla \bar{\varphi} \in L^2(0,T;H^1(\Omega)^{\mathtt{d}})$,
 the product $v\nabla \bar{\varphi} \in L^2(0,T;L^2(\Omega)^{\mathtt{d}})$ 
 gives rise to a continuous linear functional
 on $L^2(0,T;H^1(\Omega)^{\mathtt{d}})$. Hence, the right term vanishes for $n\rightarrow\infty$ by definition of weak convergence. For the left term we estimate
 \begin{multline*}
 \left|  \int_0^T ( \mB u_n \cdot \nabla (\varphi_n - 
 \bar{\varphi}),v )_{L^2(\Omega)} \,dt \right| \leq \\ \| \mB u_n 
 \|_{L^2(0,T;H^1(\Omega)^{\mathtt{d}})} \| \varphi_n - \bar{\varphi} \|_{L^2(0,T;H^1(\Omega))} \| v \|_{H^1(\Omega)},
 \end{multline*}
 which also vanishes for $n\rightarrow \infty$. For the nonlinearity $\mathcal{F}'$ we infer 
 from 
  \begin{align*}
  | \mathcal{F}' (\varphi) - \mathcal{F}'(\psi) | \leq C (\varphi ^2 + \psi ^2) \left| \varphi - \psi \right|
  \end{align*}
  for all $\varphi,\psi \in \mathbb{R}$ and some $C>0$ the estimate
 \begin{multline*}
 \left| \int_0^T ( \mathcal{F}'(\varphi_n)-\mathcal{F}'(\bar{\varphi}),v )_{L^2(\Omega)} 
 \, dt \right| \leq \\
 C (\| \varphi_n^2 \|_{L^2(0.T;L^2(\Omega))} + \| \bar{\varphi}^2 \|_{L^2(0,T;L^2(\Omega))}) \| \varphi_n - \bar{\varphi} \|_{L^2(0,T;H^1(\Omega))} \| v \|_{H^1(\Omega)},
 \end{multline*} 
 which gives the 
 desired convergence due to $L^{\infty}(0,T;H^1(\Omega))\subset L^4(0,T;L^4(\Omega))$. 
 Finally, the lower semi-continuity of $J$ yields
 $$ \hspace*{4cm} J(\bar{\varphi},\bar{u})=\inf_{u \in \Uad} \hat{J}(u). \hspace{4cm} \qed$$

 \end{proof}
 
 Problem \eqref{ocp} is a non-convex programming problem, so that different minima might exist. Numerical solution methods will converge in a local minimum which is close to the initial point. In order to compute a locally optimal solution to \eqref{ocp}, we consider the first-order necessary optimality condition given by the variational inequality
  \begin{equation}\label{firstordercond}
   \langle \hat{J}'(\bar{u}),u-\bar{u} \rangle_{U',U} \geq 0 \quad \forall u \in \Uad.
  \end{equation}
Following the 
  standard adjoint techniques, we derive that \eqref{firstordercond} is equivalent to 
  \begin{equation}\label{vi}
   \int_0^T \sum_{i=1}^m \left( \gamma \bar{u}_i(t) + \int_\Omega (\chi_i(\bx) \cdot 
   \nabla \varphi(t,\bx))\bar{p}(t,\bx) d\bx \right)(u_i(t) - \bar{u}_i(t)) dt \geq 0 
  \end{equation}
 for all $u \in \Uad$ where the function $\bar{p}$ is a solution to the adjoint equations
 \begin{equation}\label{adjoint}
 \left\{
 \begin{array}{r c l l}
  -p_t - \sigma \varepsilon \Delta q + \frac{\sigma}{\varepsilon} \mathcal{F}''(\bar{\varphi})q
  - \mathcal{B}u \cdot \nabla p & = &  -\beta_1(\bar{\varphi}-\varphi_d) & \; \text{in } I \times \Omega,\\      
  -q - b\Delta p  & = & 0, & \; \text{in } I \times \Omega,\\
  \partial_n p = \partial_n q  & = & 0, & \; \text{on } I \times \partial \Omega, \\
  p(T, \cdot) & = & -\beta_2(\bar{\varphi}(T,\cdot)-\varphi_T), & \; \text{in } \Omega.
 \end{array}
 \right.
 \end{equation}
 The variable $\bar{\varphi}$ in \eqref{adjoint} denotes the solution to \eqref{CHcoupled_withcontrol} associated with an optimal control $\bar{u}$.

  \section{POD-ROM using spatially adapted snapshots}
The optimal control problem \eqref{ocp} is discretized by adaptive finite elements and solved by a standard projected gradient method with an Armijo line search rule. In order to replace the resulting high-dimensional PDEs by low-dimensional approximations, we make use of POD-ROM, see e.g. \cite{HLBR12} or \cite{Vol13}. The nonlinearity is treated using DEIM, cf \cite{CS10}. In order to combine POD-ROM with spatially adapted snapshots, we follow the ideas in \cite{URL16} and \cite{GH17}.

 \section{Numerical results}
 We consider the unit square $\Omega = (0,1)\times(0,1)$, the end time $T=0.0125$ and utilize a uniform time grid with time step size $\Delta t = 2.5 \cdot 10^{-5}$. The mobility is $b = 2.5 \cdot 10^{-5}$, the surface tension is $\sigma = 25.98$ and the interface parameter is set to $\varepsilon = 0.02$. In the cost functional we use $\gamma = 0.0001$, $\beta_1 = 20$ and $\beta_2 = 20$. We use $m=1$ control shape function given by $\chi(x) = (\text{sin}(\pi x_1)\text{cos}(\pi x_2), -\text{sin}(\pi x_2)\text{cos}(\pi x_1))^T$. The desired state is shown in Figure \ref{fig:c_desired}. The initial state $\varphi_0$ coincides with $\varphi_d(0)$.  
 \noindent In order to fulfill the Courant-Friedrichs-Lewy (CFL) condition, we impose the control constraints $u_a = 0, u_b = 50$ and demand $h_{\min} > 0.00177$.\\
 The optimization is initialized with an input control $u=0 \in \Uad$. We compute the POD basis with respect to the $L^2(\Omega)$-inner product for the snapshot ensemble formed by the desired phase field $\varphi_d$, which is discretized using adaptive finite elements. Figure \ref{fig:c} shows the finite element solution and the POD solution for the phase field using $\ell=10$ and $\ell=20$ POD modes, respectively. It turns out that a large number of POD modes is needed in order to smoothen out oscillations due to the convection.
 
  \begin{figure}[H]
  \centering
  \includegraphics[scale=0.1113]{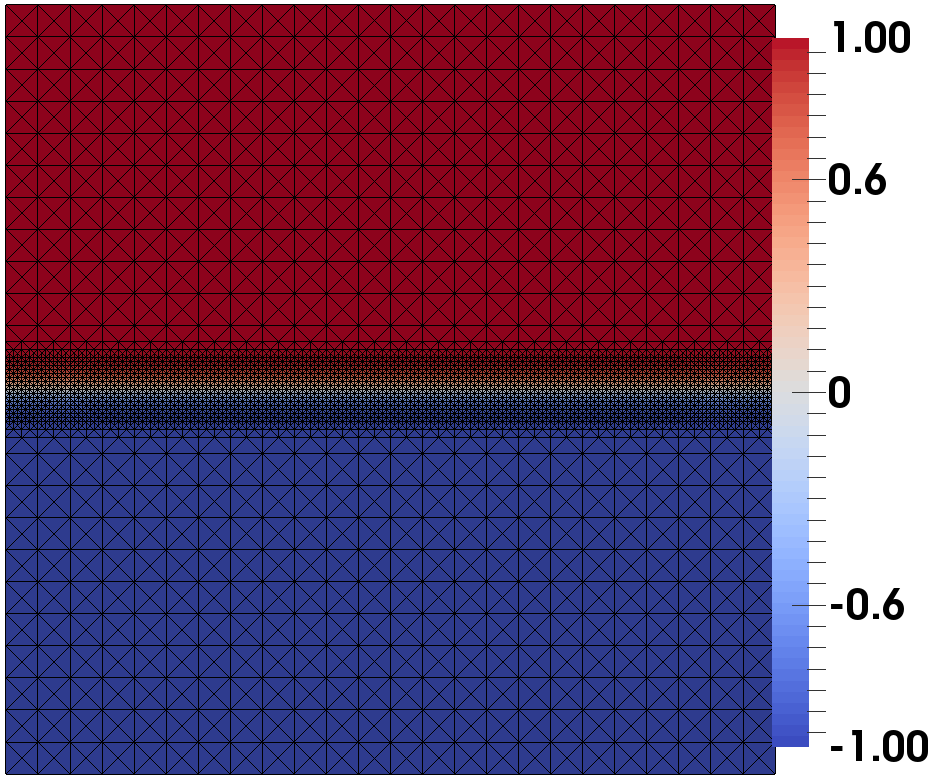} \hspace{-0.2cm}
   \includegraphics[scale=0.1113]{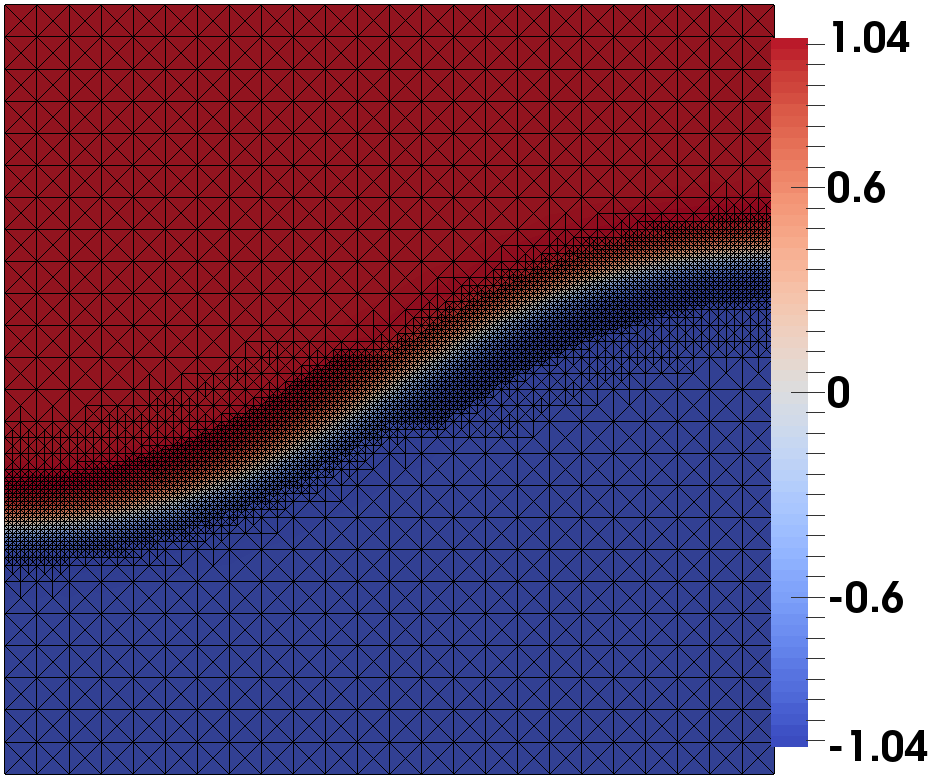} \hspace{-0.2cm}
   \includegraphics[scale=0.1113]{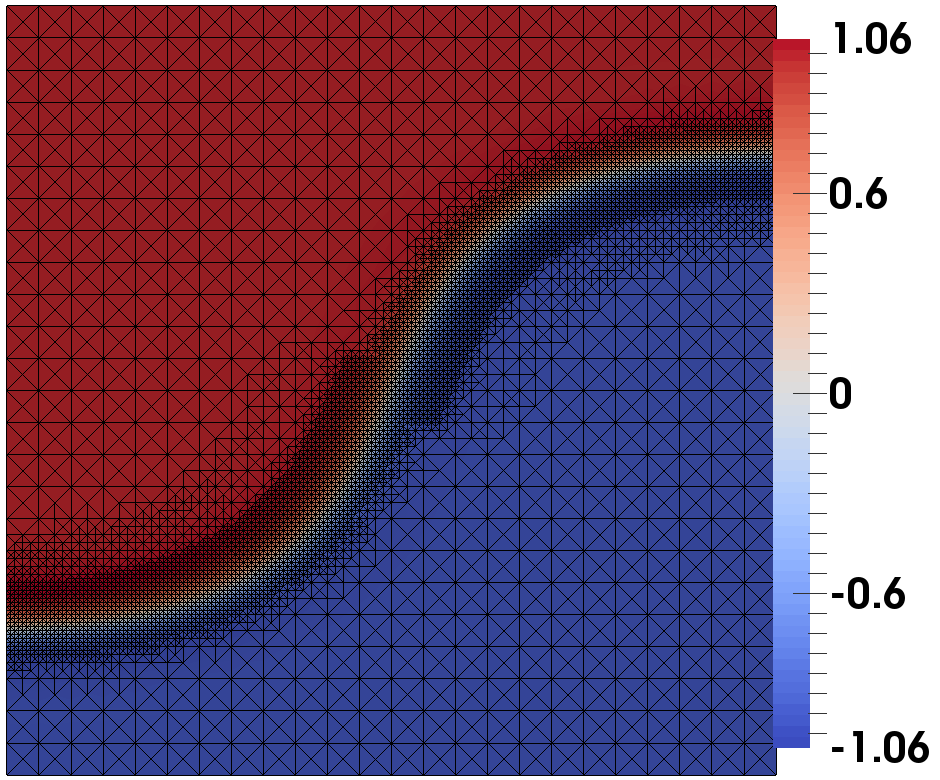}
    \caption{\em Desired phase field at $t_0, t_{250}, t_{500}$ with adaptive 
    meshes}
  \label{fig:c_desired}
  \end{figure}  \vspace{-1cm}
  \begin{figure}[H]
  \centering
  \includegraphics[scale=0.1113]{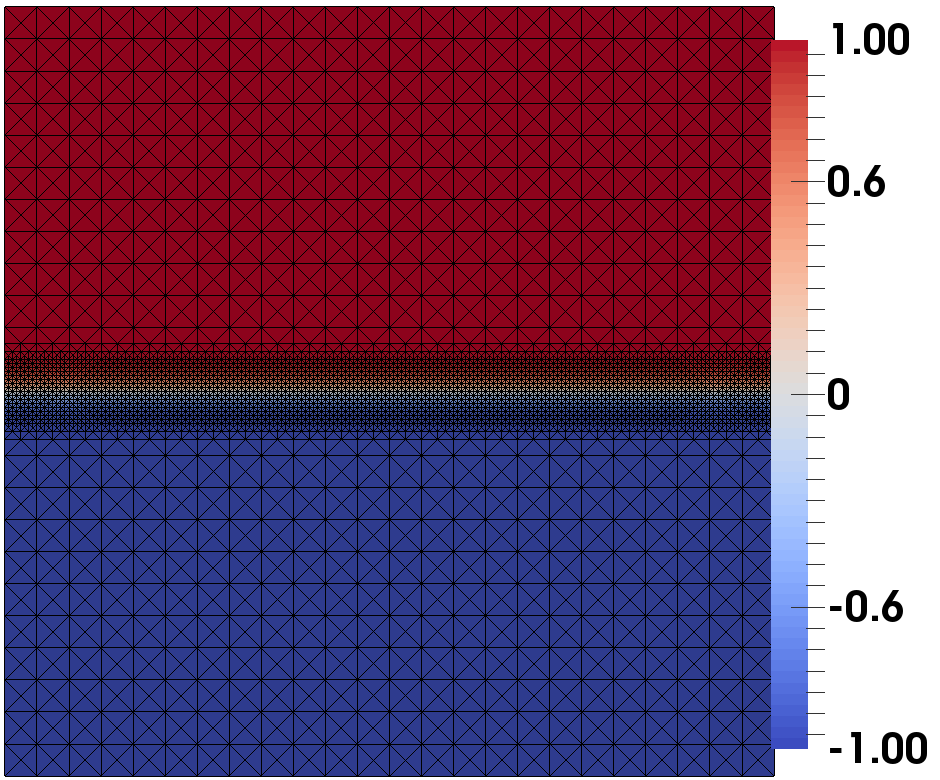} \hspace{-0.2cm}
   \includegraphics[scale=0.1113]{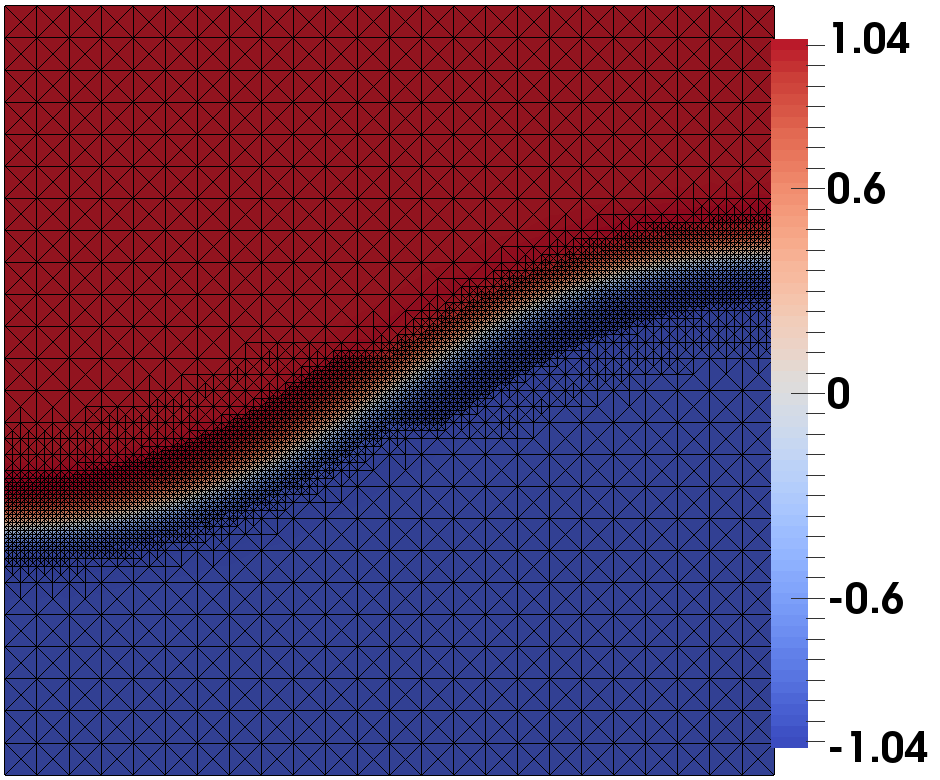} \hspace{-0.2cm}
   \includegraphics[scale=0.1113]{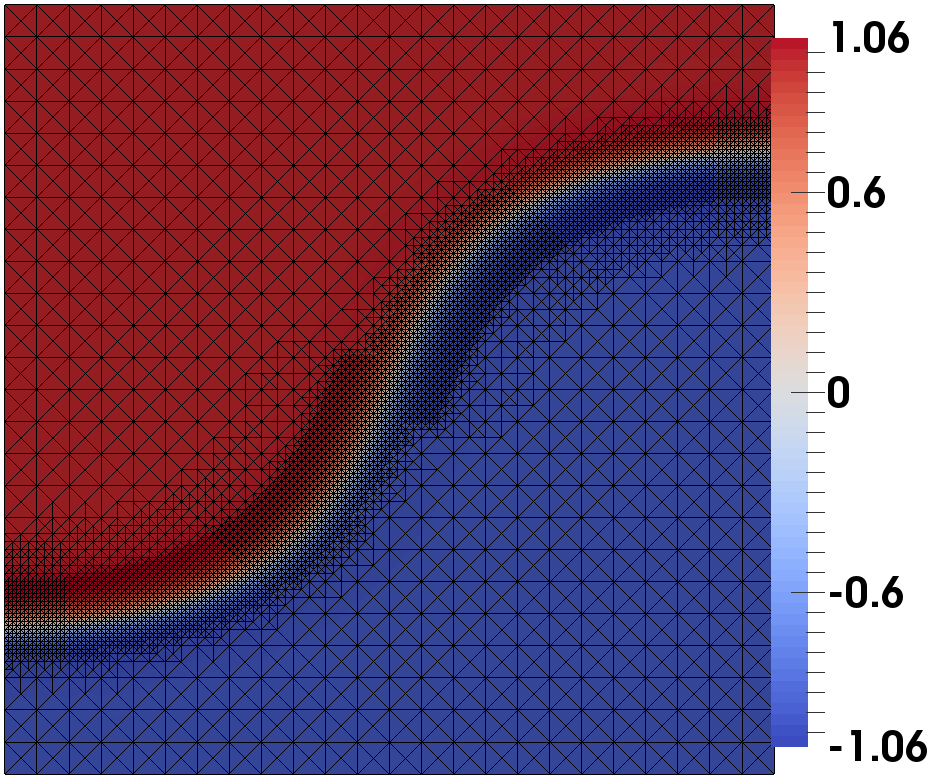}\\
   \includegraphics[scale=0.1113]{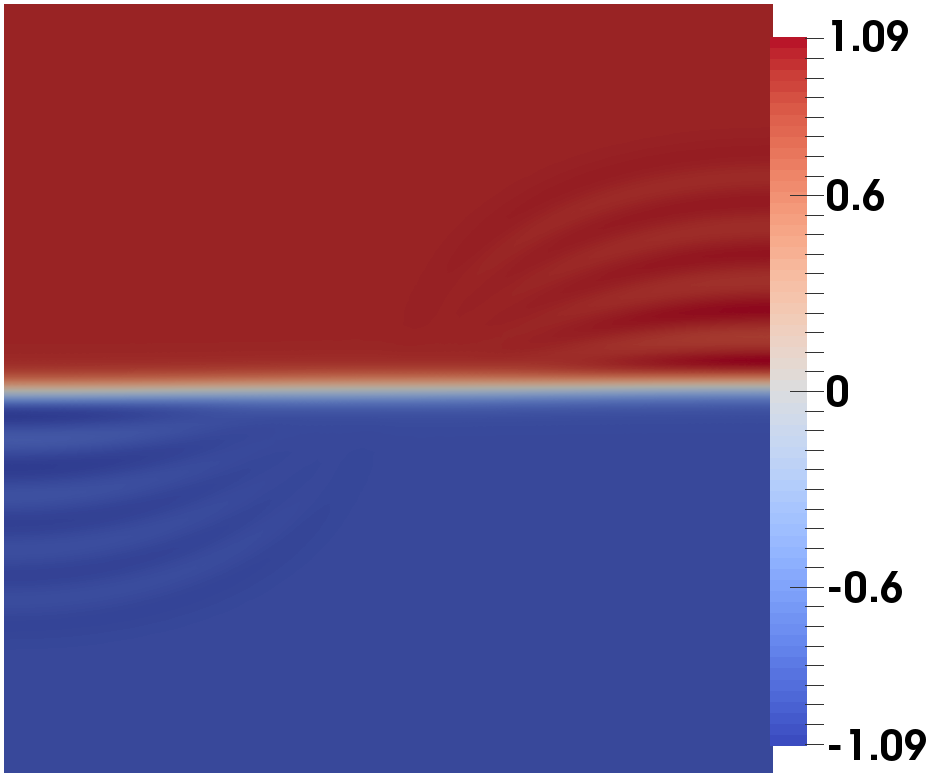} \hspace{-0.2cm}
    \includegraphics[scale=0.1113]{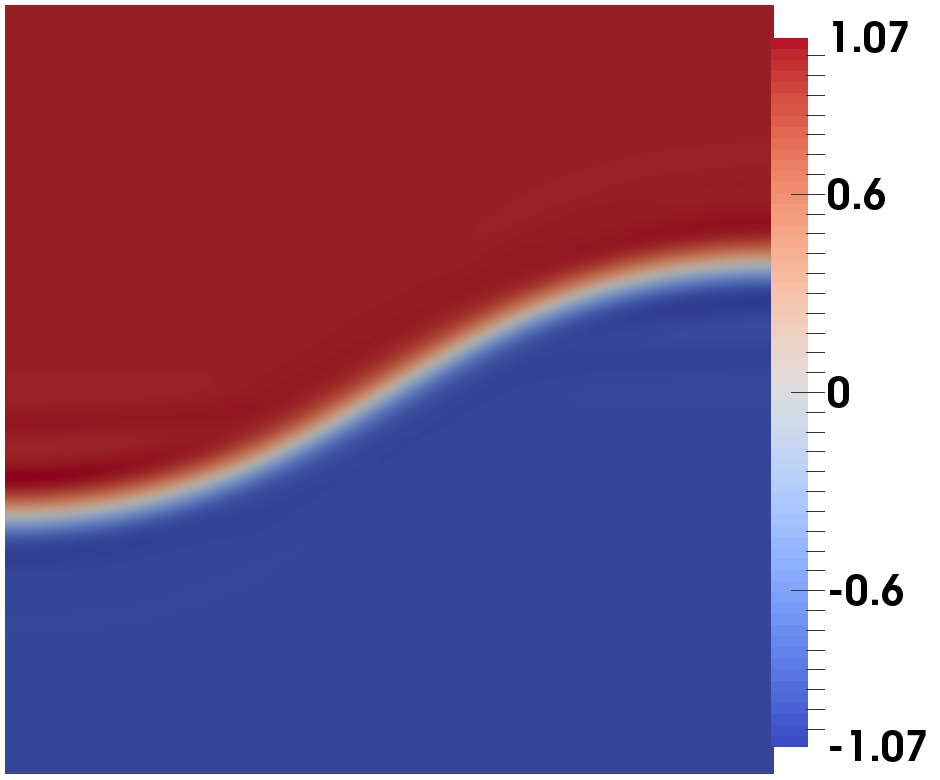} \hspace{-0.2cm}
   \includegraphics[scale=0.1113]{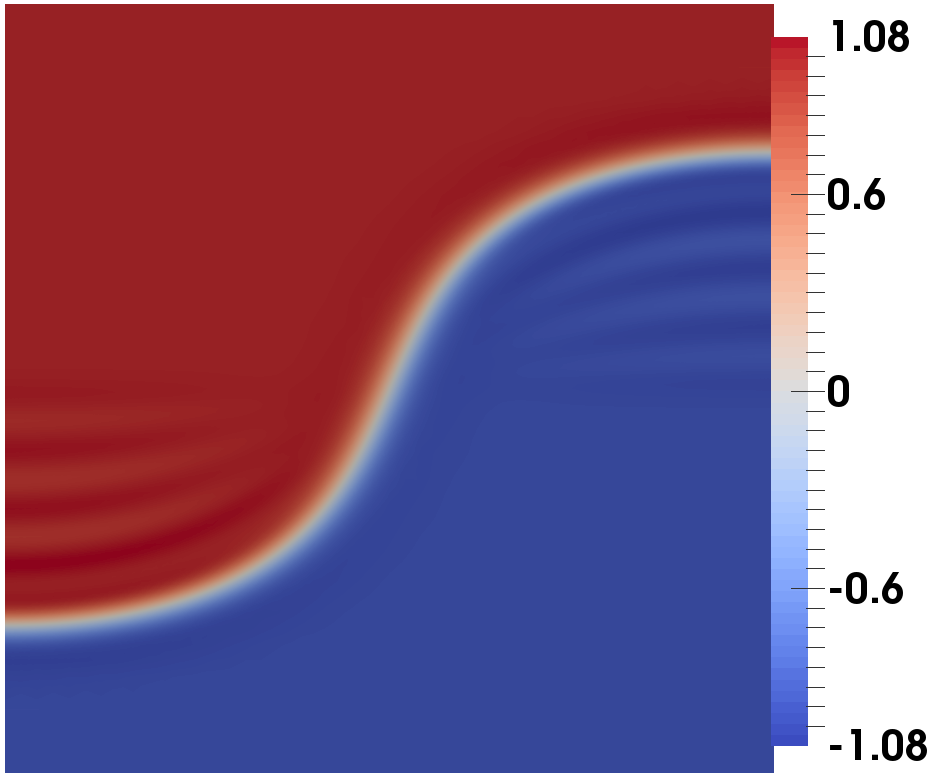}\\
       \includegraphics[scale=0.1113]{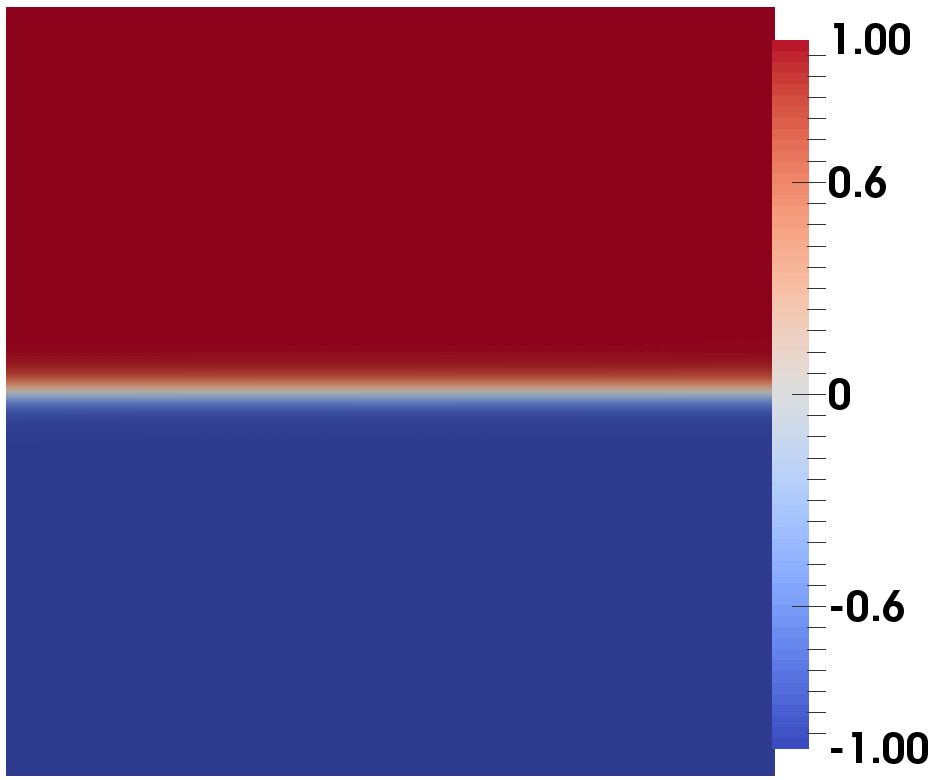} \hspace{-0.2cm}
   \includegraphics[scale=0.1113]{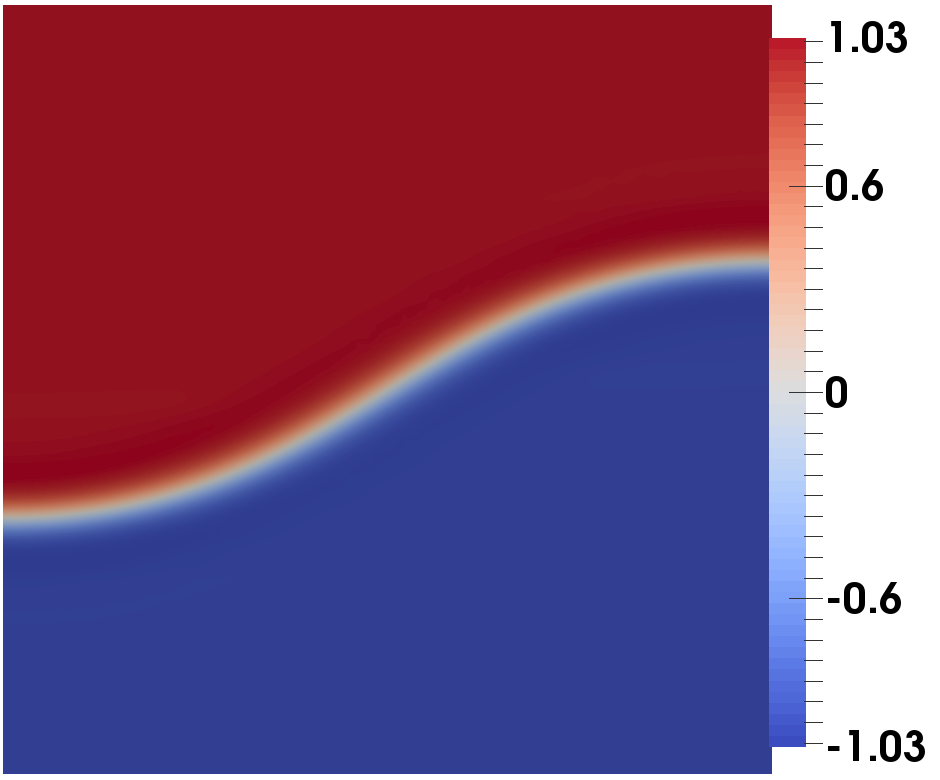} \hspace{-0.2cm}
   \includegraphics[scale=0.1113]{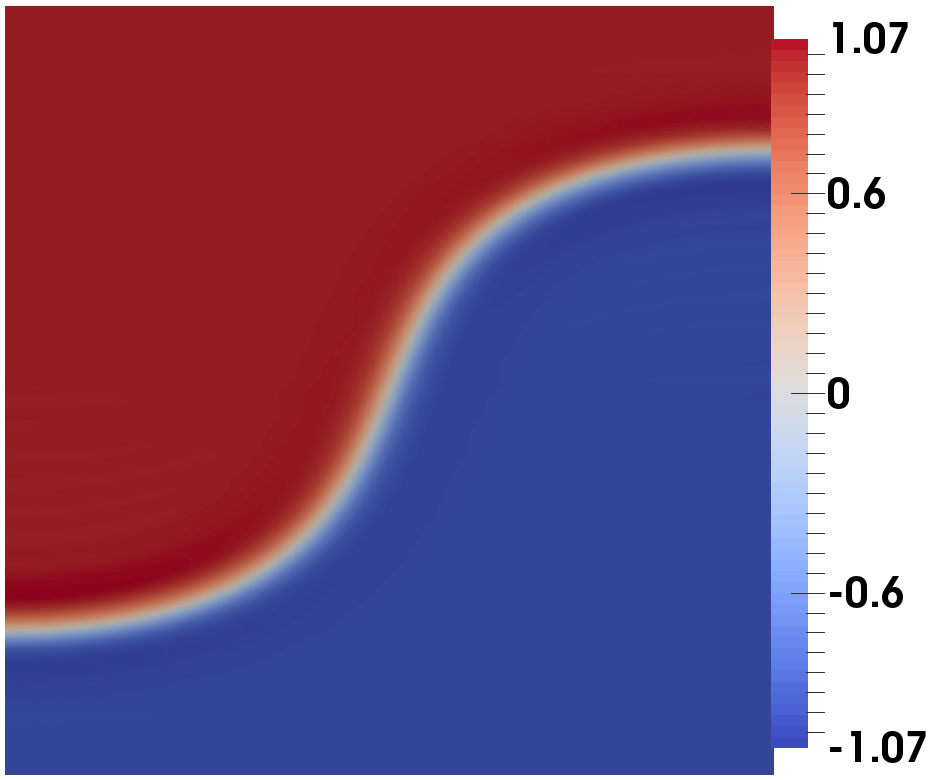}
    \caption{\em Finite element (top) and POD-DEIM optimal solution with $\ell=10$ (middle) and $\ell=20$ (bottom) of the phase field $\varphi$ at $t=t_0, t_{250}, t_{500}$ with adaptive 
    meshes}
  \label{fig:c}
  \end{figure}
  Table 1 (left) summarizes the iteration history for the finite element projected gradient method where we used the stopping criterion 
  $\|\hat{J}'(u^k)\|_{U_h} < 0.01\cdot\|\hat{J}'(u^0)\|_{U_h} + 0.01. $ Table 1 (right) 
  tabulates the POD-ROM optimization. Note that the value of the POD cost functional 
  $\hat{J}_\ell(u^k)$ stagnates 
  due to the POD error. The value of the full-order cost functional at the POD 
  solution is $\hat{J}(\bar{u}_{\text{POD}}) = 7.31 \cdot 10^{-4}$.
  If $\ell=20$ POD modes are used, the relative $L^2(0,T;L^2(\Omega))$-error
  between the finite element and the POD solution for the phase field is err$_\varphi = 7.19 \cdot 10^{-3} $; for POD-DEIM it is err$_\varphi = 7.38 \cdot 10^{-3} $.
  \begin{table}[H]
\centering
 \begin{tabular}{ c | c | c | c  }
  $k$ & $\hat{J}(u^k)$ & $\| \hat{J}'(u^k) \|_{U_h}$  & $s_k$   \\
 \hline
 0  &  $8.61 \cdot 10^{0}$  & $2.85 \cdot 10^0$ & $1.0$  \\
 1 & $6.48 \cdot 10^{-1}$ & $2.32 \cdot 10^0$ & $0.25$ \\ 
 2 & $1.90 \cdot 10^{-2}$ & $4.56 \cdot 10^{-1}$ & $0.25$ \\
 3 & $3.82 \cdot 10^{-3}$ & $1.93 \cdot 10^{-1}$ & $0.25$  \\
 4 & $1.18 \cdot 10^{-3}$ & $8.45 \cdot 10^{-2}$& $0.25$ \\
 5 & $6.80 \cdot 10^{-4}$ & $3.67 \cdot 10^{-2}$ &  \\
  \end{tabular} \hspace{0.5cm}
   \begin{tabular}{ c | c | c | c  }
  $k$ & $\hat{J}_\ell(u^k)$ & $\| \hat{J}_{\ell}'(u^k) \|_{U_h}$  & $s_k$  \\
 \hline
 0  &  $8.77 \cdot 10^{0}$  & $2.81 \cdot 10^0$ & $1.0$  \\
 1 & $7.98 \cdot 10^{-1}$ & $2.41 \cdot 10^0$ & $0.25$ \\ 
 2 & $5.79 \cdot 10^{-2}$ & $3.67 \cdot 10^{-1}$ & $0.25$ \\
 3 & $5.02 \cdot 10^{-2}$ & $1.63 \cdot 10^{-1}$ & $0.25$  \\
 4 & $4.76 \cdot 10^{-2}$ & $7.45 \cdot 10^{-2}$& $0.25$ \\
 5 & $4.76 \cdot 10^{-2}$ & $3.48 \cdot 10^{-2}$ & \\
  \end{tabular}
  \vspace{0.5cm}
  \label{tab:iter_hist}
  \caption{\em Iteration history finite element optimization (left) and POD optimization (right) with $\ell=20$. The Armijo step size is denoted by $s_k$. }
   \end{table}
  \vspace{-0.5cm}
In Table 2 the computational times for the uniform FE, adaptive FE, POD and POD-DEIM optimization are listed. The offline costs for POD when using spatially adapted snapshots are as follows: the interpolation of the snapshots takes 212 seconds, the POD basis computation costs 40 seconds and the computations for DEIM take 30 seconds. In comparison, the use of uniformly discretized snapshots leads to the computational time of 243 seconds for POD basis computation and 193 seconds for the DEIM computations.

   \begin{table}[H]
\centering
\begin{tabular}{ l | c | c | c | c }
                &  \; uniform FE   \;   &  \; adaptive FE \;   & POD & \; POD-DEIM  \\
                \hline
   optimization &  36868 sec       &    5805 sec            & 675 sec  & \hspace{0.05cm} 0.3 sec         \\
  \hline
  $\to$ solve each state eq. & \hspace{0.05cm} 1660 sec  &  \hspace{0.05cm} 348 sec & 42 sec &  0.02 sec\\
  $\to$ solve each adjoint eq. & \hspace{0.21cm} 761 sec   & \hspace{0.05cm} 121 sec & 16 sec & 0.01 sec\\
 \end{tabular} \vspace{0.5cm}
    \label{tab:iter_hist}
  \caption{\em Computational times for the FE, POD and POD-DEIM optimization.}
   \end{table}

   \section{Outlook}
   In future work, we intend to embed the optimization of Cahn-Hilliard in a trust-region framework in order to adapt the POD model accuracy within the optimization. We further want to consider a relaxed double-obstacle free energy which is a smooth approximation of the non-smooth double-obstacle free energy. We expect that more POD modes are needed in this case to get similar accuracy results as in the case of 
   a polynomial free energy. Moreover, we intend to couple the smoothness of the model to the trust-region fidelity.

\section*{Acknowledgments}  
We like to thank Christian Kahle for providing many libraries which we could use for the coding. The first author gratefully acknowledges the financial support by the DFG through the priority program SPP 1962. The third author gratefully acknowledges the financial support by the DFG through the Collaborative Research Center SFB/TRR 181.
%
%
%

\ifx\undefined\bysame
\newcommand{\bysame}{\leavevmode\hbox to3em{\hrulefill}\,}
\fi

\end{document}